\title{Smallest nontrivial quotients of the commutator subgroup of braid groups}
\author{Sudipta Kolay}
\address{ICERM, 121 South Main Street, 11th Floor\\Providence, RI 02903, USA.}
\email{sudipta\_kolay@brown.edu}
\date{}
\begin{document}
\begin{abstract}
    We prove that the smallest non-trivial quotients of the commutator subgroups of the braid groups are the alternating groups for $n\geq 5$, proving a conjecture of Chudnovsky--Kordek--Li--Partin. Furthermore, we show that any minimal quotient map is the standard projection, composed with an automorphism of the alternating group. 
\end{abstract}

\maketitle
\newtheorem{theorem}{Theorem}
\newtheorem{lemma}[theorem]{Lemma}
\newtheorem{claim}[theorem]{Claim}

\theoremstyle{definition}
\newtheorem{definition}[theorem]{Definition}
\newtheorem{example}[theorem]{Example}
\newtheorem{xca}[theorem]{Exercise}

\theoremstyle{remark}
\newtheorem{remark}[theorem]{Remark}

\section{Introduction}

We show that for $n\geq 5$, the smallest nontrivial quotient of the commutator subgroup $B'_n$ of the Artin braid group $B_n$ is the alternating group $A_n$, proving a conjecture of Chudnovsky--Kordek--Li--Partin \cite[Question 2]{chudnovsky2020finite}, and moreover we characterize all minimal quotient maps.
\begin{theorem}\label{A}
Suppose $n\geq 5$. If $G$ is a nontrivial quotient of $B'_n$, then either $|G|> |A_n|=\frac{n!}{2}$, or $G$ is isomorphic to $A_n$. Moreover, in the latter case the quotient map $B'_n\rightarrow G$ is obtained by postcomposing the natural map $\pi$ with an automorphism\footnote{For $n\geq 5$ and $n\neq 6$, it is known that every automorphism of $A_n$ is given by a conjugation in $S_n$, and for $n=6$, there is exactly one outer automorphism up to conjugation ($n=2,3$ are also exceptional cases).} of $A_n$.
\end{theorem}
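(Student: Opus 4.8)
The map $B_n \to S_n$ carries $B'_n = [B_n,B_n]$ onto $[S_n,S_n] = A_n$, so $\pi$ is a surjection, with kernel $B'_n \cap P_n$ (normal even in $B_n$). It is classical (Gorin--Lin) that $B'_n$ is perfect for $n \ge 5$, hence every nontrivial quotient is perfect, in particular non-abelian. If $G$ is infinite then $|G| > |A_n|$ automatically; if $G$ is finite with $|G| \le n!/2$, then modding out by a maximal normal subgroup of $G$ gives a surjection $B'_n \to S$ onto a non-abelian simple group with $|S| \le |G| \le n!/2$. So it suffices to prove $(\star)$: every non-abelian simple quotient $S$ of $B'_n$ with $|S| \le n!/2$ is isomorphic to $A_n$, via a map equal to $\alpha\circ\pi$ for some isomorphism $\alpha\colon A_n\to S$. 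Indeed, $(\star)$ then gives $|A_n| = |S| \le |G| \le n!/2 = |A_n|$, so $G \cong S \cong A_n$ with the stated quotient map; and $(\star)$ also disposes of the finitely many non-abelian simple groups $\ne A_n$ of order $n!/2$ (e.g.\ $\mathrm{PSL}_3(4)$ when $n=8$, which in any case has no subgroup of index $\le n$ and so cannot arise below).

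\textbf{A large permutation representation.} To prove $(\star)$ I would induct on $n$, checking $n = 5$ (and if needed $n = 6, 7$) directly from a finite Gorin--Lin presentation of $B'_n$, by coset enumeration if necessary. For the step, let $\phi\colon B'_n \to S$ be a surjection. Inside $B'_n$ lie the two ``end'' copies $U = B'_n \cap \langle \sigma_1,\dots,\sigma_{n-2}\rangle \cong B'_{n-1}$ and $V = B'_n \cap \langle\sigma_2,\dots,\sigma_{n-1}\rangle \cong B'_{n-1}$, and $\langle U, V\rangle = B'_n$ (all generators of a finite presentation of $B'_n$ lie there). So $\phi$ fails to annihilate at least one of them; say $\phi(U) \ne 1$. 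Since $U$ is perfect, any simple quotient of $\phi(U)$ is a simple quotient of $B'_{n-1}$, so by induction $|\phi(U)| \ge (n-1)!/2$. Writing $H = \phi(U)$ we get $[S:H] \le n$; if $H \ne S$, simplicity of $S$ makes the action on the $\le n$ cosets of $H$ faithful and transitive, so $S$ is a transitive simple subgroup of $S_n$ possessing a subgroup of order $\ge (n-1)!/2$.

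\textbf{Pinning down $S$ and the map.} By the classical theory of large subgroups of $S_n$ (Jordan, Bochert, and refinements), for $n$ past a short list the only transitive subgroups of $S_n$ of order $\ge (n-1)!/2$ are $A_n$ and $S_n$, so $S \cong A_n$. The small exceptions --- chiefly the transitive $A_5 \le S_6$ when $n = 6$ --- are ruled out separately: the generator of $B'_n$ outside $B'_{n-1}$ commutes with a prescribed generator of $B'_{n-1}$, which together with the inductively known standard form of $\phi|_{B'_{n-1}}$ excludes the spurious targets. Given $S \cong A_n$, the subgroup $H$ is forced to be a point stabilizer $A_{n-1}$ (as $A_n$ has no subgroup of index $< n$), so $\phi|_{B'_{n-1}}$ is standard up to conjugation in $S_n$; one final use of the braid relations fixes $\phi$ on the remaining generator, yielding $\phi = \alpha\circ\pi$.

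\textbf{Main obstacle.} The crux is the last paragraph: restricting to $B'_{n-1}$ only shows $S$ is a fairly large simple subgroup of $S_n$, and cutting it down to exactly $A_n$ needs genuine input about permutation groups of large order (or, for a CFSG-free argument, a far more careful use of several commuting copies of $B'_{n-1}$, and of $B'_{n-2}$, inside $B'_n$ together with the braid relations). Also delicate is the case $H = S$ glossed over above, where the coset argument is vacuous and one must instead use, e.g., the small centralizer in $S$ of the image of a central copy of $B'_{n-2}$ (or suitably strengthen the inductive hypothesis). Finally, the base cases $n = 5,6,7$ --- where the order estimates are loose and the sporadic coincidences (the exotic $A_5$ inside $A_6$, the $A_8$-coincidences) live --- carry much of the actual weight of the proof.
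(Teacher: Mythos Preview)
Your strategy is genuinely different from the paper's. The paper does \emph{not} reduce to simple quotients or use a subgroup-index argument via $B'_{n-1}$. Instead it runs the orbit--stabilizer theorem on the conjugacy class of $\overline{\sigma_2\sigma_1^{-1}}$ in $G$: it shows this class has at least $2\binom{n}{3}$ elements (Lemma~\ref{lemB}), and that the centralizer contains both $\langle\overline{\sigma_2\sigma_1^{-1}}\rangle$ (of order $\ge 3$ by Lemma~\ref{lm2}) and the image of a disjoint $B'_{n-3}$, which inductively contributes a factor $\ge (n-3)!/2$. Multiplying gives $|G|\ge n!/2$, and the equality analysis (Lemma~\ref{lemC} plus the Carmichael presentation of $A_n$) pins down the map. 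The induction is in steps of three, with $n=5,6,7$ as computer-verified base cases.

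Your case $H\neq S$ is in fact cleaner than you suggest and needs no Bochert-type input: from $|H|\ge (n-1)!/2$ and $|S|\le n!/2$ one gets $[S:H]\le n$, and for $2\le k\le n-1$ the embedding $S\hookrightarrow S_k$ forces $(k-1)!\ge (n-1)!/2$, impossible for $n\ge 5$; so $[S:H]=n$, hence $|S|=n!/2$ and $S=A_n$ as the unique index-$2$ subgroup of $S_n$.

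The real gap is the case $H=S$, which you flag but do not close. If $\phi|_{B'_{n-1}}$ is already onto $S$, the coset argument is vacuous, and your inductive hypothesis says nothing beyond $|S|\ge (n-1)!/2$: you have no control over simple quotients of $B'_{n-1}$ with order in $((n-1)!/2,\,n!/2]$, and there is no a priori reason such quotients could not extend to $B'_n$. Your proposed patches are not convincing: there is no copy of $B'_{n-2}$ inside $B'_n$ with a usefully large centralizer (the strands left over support only a single $\sigma_j$, which is not in $B'_n$), and ``strengthening the inductive hypothesis'' to control quotients of $B'_{n-1}$ up to size $n!/2$ is essentially restating the problem. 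This is precisely the difficulty the paper's orbit--stabilizer approach sidesteps: there, having $\overline{R}$ large (even equal to $G$) only helps the lower bound, whereas in your index argument having $H$ too large destroys it.
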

Let us note that the corresponding statement is false for $n=4$, since $A_4$ is not simple.
 Chudnovsky--Kordek--Li--Partin~\cite{chudnovsky2020finite} proved lower bounds\footnote{These lower bounds have subsequently been improved by Caplinger--Kordek \cite{caplinger2020small} and Scherich--Verberne \cite{scherich2020finite}.} for the size of non-cyclic quotient of $B_n$, providing evidence in support of Margalit's conjecture \cite[Question 1]{chudnovsky2020finite}: for $n\geq 5$ the smallest non-cyclic quotient of braid group $B_n$ is the symmetric group $S_n$. Also, they made an analogous\footnote{Let us note that the standard forgetful projection map $\pi:B_n\rightarrow S_n$ restricts to give an epimorphism of the commutator subgroups $\pi:B'_n\rightarrow A_n$.} conjecture regarding the commutator subgroups, that for $n\geq 5$ the smallest nontrivial quotients of $B'_n$ is $A_n$, and proved similar lower bounds. Caplinger--Kordek proved~\cite[Theorem 1]{caplinger2020small} the latter conjecture for $n\in\{5,6,7,8\}$ (and Margalit's conjecture for $n\in\{5,6\}$). We will give an inductive proof of Theorem~\ref{A} where we will use this result of Caplinger--Kordek as base cases.

The approach of Caplinger--Kordek in \cite{caplinger2020small} is similar to that of Kielak--Pierro \cite{Kielak2017OnTS} and Baumeister--Kielak--Pierro \cite{Baumeister2019OnTS} in their proof of analogous conjectures in the setting of mapping class groups and outer automorphism groups of free groups due to Zimmermann \cite{Zimmermann2008ANO} and Mecchia--Zimmermann~\cite{Mecchia2009OnMF}, respectively. This approach crucially relies on the classification of finite simple groups, and tries to rule out quotients of smaller order.  The author gave  elementary proofs \cite{Kolay2021BMCG,Kolay2021AFn} of the aforementioned conjectures of Margalit, Zimmermann, and Mecchia--Zimmermann using the inductive orbit stabilizer method \cite[Section 3]{Kolay2021BMCG}, and the present paper shows the same techniques answer the conjecture by Chudnovsky--Kordek--Li--Partin. In this context, we will prove (most of) Theorem~\ref{A} by carrying out in Sections \ref{lb} and \ref{ind} respectively, the following two steps of the inductive orbit stabilizer method.
\begin{enumerate}
    \item For $n\geq 8$, the conjugacy class of the image of $\sigma_2\sigma_1^{-1}$ under any nontrivial quotient of $B'_n$ has at least $2{ n\choose 3}$ elements.
    \item Inductively, the stabilizer of the image of $\sigma_2\sigma_1^{-1}$ has cardinality at least $\frac{3}{2}(n-3)! $, and the result follows by the orbit stabilizer theorem.
\end{enumerate}

 These two steps will prove Theorem~\ref{A} only for the case $n \geq 8$. The remaining cases $n\in\{ 5,6,7\}$ is computationally verified by Noah Caplinger, adapting previous work in \cite{caplinger2020small}, using very similar code~\cite{code}. However, it may be possible to prove Theorem~\ref{A} without any computer assistance, for instance by improving the $n\geq 8$ hypothesis in the lower bounding orbit size step by a more careful case analysis, and using similar ideas as in \cite[Proof of Theorem 1 for $n=6$]{Kolay2021BMCG}.

Along the way to prove our main result, we show in the next section that for $n\geq 5$, any one of the elements  $\sigma_2\sigma_1^{-1}$, or $(\sigma_2\sigma_1^{-1})^2$, or $\sigma_3\sigma_1^{-1}$ normally generate the commutator subgroup $B'_n$.\\

\noindent\emph{Background}: We refer the reader to  \cite[Chapter 1]{lin2004braids} for a comprehensive discussion on the topics of braid groups and their commutator subgroups, and \cite[Section 2]{Kolay2021BMCG} for a brief introduction. We will introduce/cite most notation needed along the way. \\

\noindent\emph{Acknowledgments}: The author would like to thank Noah Caplinger for the computer verification of the main theorem in the exceptional cases $n\in\{5,6,7\}$. The author is grateful to Dan Margalit for pointing out the shorter proof of Lemma~\ref{lm2} and various discussions. The author thanks John Etnyre for helpful suggestions. This work is supported by NSF grants DMS-1439786 and DMS-1906414 while the author is/was located at ICERM and Georgia Tech respectively.\\

\section{Normal generators of the commutator subgroup of the braid group}

In this section we will prove some facts about normal generators of the commutator subgroup $B'_n$, which may be of independent interest. It is known \cite[Remark 1.10]{lin2004braids} that for $n\geq 5$ the normal closure in $B_n$ of either the element $\sigma_2\sigma_1^{-1}$ or  $\sigma_3\sigma_1^{-1}$ is the commutator subgroup $B'_n$. Here we slightly restate this result by just looking at the normal closure only in $B'_n$.

\begin{lemma}\label{lemA}
For $n\geq 5$, the normal closure (in $B'_n$) of either the element $\sigma_2\sigma_1^{-1}$ or  $\sigma_3\sigma_1^{-1}$ is $B_n'$.
\end{lemma}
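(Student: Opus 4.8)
The plan is to bootstrap from the known fact that the normal closure of $\sigma_2\sigma_1^{-1}$ (resp. $\sigma_3\sigma_1^{-1}$) \emph{in the full braid group $B_n$} is $B'_n$, and then upgrade "normal in $B_n$" to "normal in $B'_n$". Write $w$ for either of the elements $\sigma_2\sigma_1^{-1}$ or $\sigma_3\sigma_1^{-1}$, and let $N\trianglelefteq B'_n$ be the normal closure of $w$ in $B'_n$. Clearly $N\subseteq B'_n$ since $w\in B'_n$ (it maps to the identity in $\mathbb{Z}=B_n^{\mathrm{ab}}$, as both $\sigma_i$ have the same image). The issue is the reverse inclusion: a priori the $B_n$-conjugates of $w$ are larger than the $B'_n$-conjugates, so $N$ could be a proper subgroup.

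First I would observe that $B_n = \langle B'_n, \sigma_1\rangle$, since $B_n/B'_n\cong\mathbb{Z}$ is generated by the image of $\sigma_1$; more usefully, every element of $B_n$ can be written as $\sigma_1^k b$ with $b\in B'_n$. Hence the normal closure of $w$ in $B_n$ is generated by all $\sigma_1^{-k} b^{-1} w b\, \sigma_1^k$, and it suffices to show each conjugate $\sigma_1^{-k} w \sigma_1^k$ already lies in $N$; then conjugating further by $B'_n$ stays in $N$, and we recover all of $B'_n$ by the known $B_n$-result. Equivalently, it is enough to show $\sigma_1^{-1} w \sigma_1 \in N$ and $\sigma_1 w \sigma_1^{-1}\in N$ (then induct on $k$). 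The key step is therefore an explicit braid identity: I want to exhibit, for each choice of $w$, an element $g\in B'_n$ with $g^{-1} w g = \sigma_1^{\pm 1} w \sigma_1^{\mp 1}$, or more flexibly, to show that $\sigma_1^{-1}w\sigma_1$ lies in the subgroup generated by $B'_n$-conjugates of $w$. A natural candidate: since $n\geq 5$ there is room to find band generators or conjugates $\sigma_i\sigma_j^{-1}$ that are themselves $B'_n$-conjugate to $w$, and products/quotients of these can realize the $\sigma_1$-shift. For instance $\sigma_1^{-1}(\sigma_2\sigma_1^{-1})\sigma_1 = \sigma_1^{-1}\sigma_2$, and one checks $\sigma_1^{-1}\sigma_2 = (\sigma_2\sigma_1^{-1})^{-1}$ conjugated appropriately — I would hunt for the cleanest such relation, likely using the commuting pair $\sigma_1,\sigma_3$ (valid since $n\geq 4$) and the braid relation $\sigma_1\sigma_2\sigma_1=\sigma_2\sigma_1\sigma_2$.

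An alternative, possibly cleaner route: show directly that $N$ is normal in $B_n$, not just in $B'_n$. Since $N\trianglelefteq B'_n$ and $B'_n\trianglelefteq B_n$, it is enough to check that conjugation by $\sigma_1$ preserves $N$, i.e. $\sigma_1^{-1} N\sigma_1 = N$. As $N$ is generated by $B'_n$-conjugates of $w$, and $\sigma_1$ normalizes $B'_n$, we get $\sigma_1^{-1} N \sigma_1$ is the normal closure in $B'_n$ of $\sigma_1^{-1} w \sigma_1$; so again everything reduces to the single membership $\sigma_1^{-1} w\sigma_1 \in N$ (and its inverse-conjugate). Once that is in hand, $N\trianglelefteq B_n$, $N$ contains $w$, hence $N$ contains the $B_n$-normal closure of $w$, which is $B'_n$ by \cite[Remark 1.10]{lin2004braids}, and since $N\subseteq B'_n$ we conclude $N=B'_n$.

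I expect the main obstacle to be precisely the explicit verification that $\sigma_1^{-1}(\sigma_2\sigma_1^{-1})\sigma_1$ and $\sigma_1^{-1}(\sigma_3\sigma_1^{-1})\sigma_1$ are products of $B'_n$-conjugates of $\sigma_2\sigma_1^{-1}$ (resp.\ $\sigma_3\sigma_1^{-1}$) — a finite but fiddly computation in the braid group that requires $n\geq 5$ so that enough strands are available to move conjugating elements into $B'_n$. The degenerate small-$n$ behavior (the statement genuinely needs $n\geq 5$, as $B'_3, B'_4$ are not even finitely normally generated by such elements in the same way) suggests the computation is not purely formal and uses the extra generators $\sigma_4,\dots,\sigma_{n-1}$ in an essential way; I would organize it by first handling $n=5$ by hand and then noting the identity persists for larger $n$ since the relevant generators sit inside a copy of $B_5$.
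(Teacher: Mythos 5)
Your overall strategy is the same as the paper's — bootstrap from the known fact (Lin's Remark~1.10) that the $B_n$-normal closure of $w$ is $B'_n$, and then argue that $B_n$-conjugates of $w$ are already captured inside the $B'_n$-normal closure $N$. Your reduction is correct and cleanly argued: since $B_n=\langle B'_n,\sigma_1\rangle$, it suffices to show $\sigma_1^{\pm 1} w\sigma_1^{\mp 1}\in N$, from which $N\trianglelefteq B_n$ and hence $N=B'_n$.

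However, you leave the decisive step — showing $\sigma_1^{-1}w\sigma_1\in N$ — as a ``finite but fiddly computation'' that you have not carried out, and you frame it as the main obstacle. This is a genuine gap. The paper closes it with one clean observation that makes the computation disappear: if $\tau$ is a \emph{half twist commuting with $w$}, then for any $g\in B_n$ one has $g^{-1}wg=(\tau^{-e(g)}g)^{-1}w(\tau^{-e(g)}g)$, where $e(g)$ is the exponent sum of $g$; since $\tau$ has exponent sum $1$, the element $\tau^{-e(g)}g$ has exponent sum $0$, i.e.\ lies in $B'_n$. Thus every $B_n$-conjugate of $w$ is already a $B'_n$-conjugate, not merely an element of the normal closure. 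Concretely, $\sigma_4$ commutes with $\sigma_2\sigma_1^{-1}$ (this is where $n\ge 5$ is used), and $\sigma_1$ commutes with $\sigma_3\sigma_1^{-1}$. You do gesture at the pair $\sigma_1,\sigma_3$ commuting, but you do not identify the commuting half twist $\sigma_4$ for $\sigma_2\sigma_1^{-1}$, nor do you articulate the exponent-sum-correction trick that turns the hypothetical ``fiddly computation'' into a one-line argument. Without that, the proof is an outline with a hole precisely where the paper does the real work.
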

\begin{proof}
We use the well known (but very useful) fact that the commutator subgroup precisely consists of all braids which have exponent sum 0. This implies that if we show that two elements of $B'_n$ are conjugate by some element in $B_n$, and we can find a half twist commuting with one of the elements, then the two elements are conjugate by an element of $B'_{n}$ (we can multiply the conjugating braid with suitable power of the commuting half twist so as to get exponent sum 0), and we will repeatedly use this observation implicitly hereafter. Let us note that in $B_n$ with $n\geq 5$, the braids $\sigma_2\sigma_1^{-1}$ or  $\sigma_3\sigma_1^{-1}$ commute with the half twist $\sigma_4$ and $\sigma_1$ respectively, and so the lemma follows by the aforementioned fact \cite[Remark 1.10]{lin2004braids}.\end{proof}

Now we will look at a strengthening of the above lemma, for which we will give two proofs.

\begin{lemma}\label{lm2}
For $n\geq 5$, the normal closure (in $B'_n$) of the element $(\sigma_2\sigma_1^{-1})^2$ is $B_n'$. 
\end{lemma}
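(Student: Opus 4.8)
The plan is to deduce this from Lemma~\ref{lemA}. Write $a = \sigma_2\sigma_1^{-1}$ and let $N = \langle\langle a^2 \rangle\rangle_{B'_n}$ be the normal closure in $B'_n$; since $a^2$ lies in the normal closure of $a$, we automatically have $N\subseteq B'_n$, so by Lemma~\ref{lemA} it is enough to show that one of $\sigma_2\sigma_1^{-1}$ or $\sigma_3\sigma_1^{-1}$ belongs to $N$. The basic device, already used in the proof of Lemma~\ref{lemA}, is that $a^2$ commutes with the half-twist $\sigma_4$ (available since $n\geq 5$): multiplying any conjugating braid by a suitable power of $\sigma_4$ corrects its exponent sum to zero, so every $B_n$-conjugate of $a^{\pm 2}$ is in fact a $B'_n$-conjugate of it and therefore lies in $N$. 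In particular $N$ is also the normal closure of $a^2$ in the full group $B_n$, and it contains every product of $B_n$-conjugates of $a^{\pm 2}$.

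The crux is then a single braid identity. The cleanest target is to produce an element $g$ (which can moreover be taken in $B'_n$) with $[g, a^2] = \sigma_3\sigma_1^{-1}$; granting this, $\sigma_3\sigma_1^{-1} = (g a^2 g^{-1})\cdot a^{-2}$ is a product of two conjugates of $a^{\pm 2}$, hence lies in $N$, and Lemma~\ref{lemA} finishes the proof. That such a $g$ should exist is strongly suggested by projecting to $S_n$, where $a^2\mapsto (1\,2\,3)$, $\sigma_3\sigma_1^{-1}\mapsto (1\,2)(3\,4)$, and $(1\,2)(3\,4) = [(1\,2\,4),(1\,2\,3)]$; so one looks for $g$ mapping to a conjugate of $(1\,2\,4)$, for instance $g = \sigma_3\sigma_1\sigma_2^{-1}\sigma_3^{-1}$ or a nearby word, and verifies the displayed identity in $B_n$ using the braid and far-commutativity relations. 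If the pure commutator form does not hold verbatim, one broadens the search to writing $\sigma_3\sigma_1^{-1}$ as a somewhat longer product of conjugates of $a^{\pm 2}$, which the $S_n$ computation shows is at least combinatorially consistent.

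I expect this braid-word verification to be the main obstacle: it is genuinely necessary to compute in $B_n$ rather than in $S_n$, since $a^2$ is not $B_n$-conjugate to $a$ — for example their images in $B_3/Z(B_3)\cong \mathrm{PSL}_2(\mathbf Z)$ have different traces — so the conclusion is not a formal consequence of the symmetric-group picture. A more structural alternative, likely shorter where it applies, is to pass to $Q := B'_n/N$, which is perfect (because $B'_n$ is, for $n\geq 5$) and is generated by the single conjugacy class of the involution $\bar a$; using that $\bar a$ commutes with a $B'_n$-conjugate of itself (e.g.\ the disjointly supported $\sigma_5\sigma_4^{-1}$ when $n\geq 6$) together with the braid relations, one would try to show $\bar a$ is central, whence $Q$ is abelian and so trivial. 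The delicate point there is that a single conjugacy class of involutions can generate a nonabelian, even perfect, group (such as $A_5$), so the braid relations must enter essentially; and the case $n=5$, where no conjugate of $a$ is disjointly supported, would still need the explicit computation of the first approach.
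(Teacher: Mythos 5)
Your overall reduction is sound: because $(\sigma_2\sigma_1^{-1})^2$ commutes with $\sigma_4$, the normal closure $N$ of $(\sigma_2\sigma_1^{-1})^2$ in $B'_n$ coincides with its normal closure in $B_n$, and by Lemma~\ref{lemA} it would suffice to place $\sigma_2\sigma_1^{-1}$ or $\sigma_3\sigma_1^{-1}$ in $N$. But the proof stops exactly where it would have to start doing work. The proposed identity $[g,(\sigma_2\sigma_1^{-1})^2]=\sigma_3\sigma_1^{-1}$ is never verified, and you explicitly flag that you don't know whether it holds; the $S_n$ calculation only shows that the images are consistent, which (as you yourself note) carries no information about $B_n$, since the kernel $P_n$ is huge. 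Equivalently, your ansatz requires $(\sigma_2\sigma_1^{-1})^2$ to be conjugate in $B_n$ to $\sigma_3\sigma_1^{-1}(\sigma_2\sigma_1^{-1})^2$, which is a nontrivial conjugacy problem you have not resolved; and the fallback of ``broadening the search to a somewhat longer product of conjugates'' is not an argument but a restatement of what needs to be proved (indeed, that such an expression exists is \emph{equivalent} to the lemma). The second, ``structural'' alternative you sketch---show $\bar a$ is central in $Q=B'_n/N$---has the same status: you correctly identify why it does not follow formally (a conjugacy class of involutions can generate $A_5$), but you do not supply the braid-relation computation that would be needed, and you acknowledge $n=5$ would need separate treatment. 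So as written there is a genuine gap: the crux is deferred, not done.

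For comparison, the paper sidesteps the need for any such explicit identity. Its first proof works inside the finite Gorin--Lin presentation of $B'_n$ ($n\geq 5$): imposing $\overline{u}^2=1$ and using the presentation's relations one derives $\overline{w}^5=1$ and $\overline{w}^2=\overline{w}$, hence $\overline{w}=\overline{c_1}=1$, and Lemma~\ref{lemA} finishes it. The second (Margalit's) proof is actually close in spirit to your commutator idea---it produces one explicit element $f=[(\sigma_2\sigma_1^{-1})^2,\rho_{24}]\in N$---but then invokes the well-suited arc criterion of \cite[Lemma 4.2]{chen2019homomorphisms} rather than trying to recognize $f$ (or a product of such) as a normal generator by hand. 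That criterion is precisely the tool that replaces the unbounded search your plan would require; without it, or without the presentation, you would need to actually exhibit and verify a concrete relation, which is the step missing here.
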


\begin{proof} We will use the notation of \cite[Remark 1.8]{lin2004braids} of the Gorin-Lin presentation (see also  \cite{Gorin_1969}) of the commutator subgroup $B_n'$ for $n\geq 5$. 
That is, we have $$u=\sigma_2\sigma_1^{-1},\quad v=\sigma_1\sigma_2\sigma_1^{-2},\quad w=\sigma_2\sigma_3\sigma_1^{-1}\sigma_2^{-1},\quad c_1=\sigma_3\sigma_1^{-1}.$$ 
Let us denote the quotient class of an element $x$ in $B'_n$ by $\overline{x}$ after modding out by the relation $(\sigma_2\sigma_1^{-1})^2=1$. In the quotient, we 
have $\overline{\sigma_2\sigma_1^{-1}}=\overline{\sigma_1\sigma_2^{-1}}$, and thus by a conjugation we see that $\overline{\sigma_3\sigma_2^{-1}}=\overline{\sigma_2\sigma_3^{-1}}$. Therefore we obtain 
\begin{equation}\label{aa}
 \overline{c_1^{-1}}=\overline{\sigma_1\sigma_3^{-1}} =\overline{\sigma_1\sigma_2^{-1}\sigma_2\sigma_3^{-1}} =\overline{\sigma_2\sigma_1^{-1}\sigma_3\sigma_2^{-1}}=\overline{w}   
\end{equation}

We note that since $u$ and $v$ are conjugate, they must both have the same order two in the quotient, i.e. $\overline{u^{-1}}=\overline{u}$ and $\overline{v^{-1}}=\overline{v}$.
By \cite[Equation 1.16]{lin2004braids} and \cite[Equation 1.17]{lin2004braids} we see that $\overline{c_1}=\overline{uwu}=\overline{w^2c_1^{-1}w}$; and hence together with \eqref{aa}, this implies $\overline{w^{-1}}=\overline{w^{4}}$, i.e. $\overline{w^{5}}=1$.
Similarly, we note that \cite[Equation 1.18]{lin2004braids} and \cite[Equation 1.19]{lin2004braids} can be rephrased in the present context as $\overline{vc_1v}=\overline{w^2}$ and $\overline{vwv}=\overline{w^9}=\overline{w^{-1}}$, respectively. As $\overline{vc_1v}$ and $\overline{vwv}$  are inverses of each other it follows that $\overline{w^2}$ must be the inverse of $\overline{w^{-1}}$, i.e. $\overline{w^2}=\overline{w}$, or in other words $\overline{w}=1=\overline{c_1}$. The result now follows from Lemma~\ref{lemA}.\end{proof}

Dan Margalit pointed out to us that Lemma~\ref{lm2} can also be proved more succinctly using the well suited arc criterion~\cite[Lemma 4.2]{chen2019homomorphisms}, as explained below after introducing some terminology.

\noindent\emph{Notation}: Suppose $\gamma_{i,j}$ (respectively $\delta_{i,j}$) denotes the arc  joining the $i$th and $j$th punctures going above (respectively below) the intermediate punctures. Also, we let $\rho_{ij}$ (respectively $\varrho_{ij}$) denote the right handed half twist about the arc $\gamma_{i,j}$ (respectively $\delta_{i,j}$). We note that $\rho_{ij}$'s are the Birman-Ko-Lee generators \cite{BIRMAN1998322} of $B_n$, and moreover $\rho_{ij}$ equals the Artin generator $\sigma_i$ for $j=i+1$.
\begin{proof}[Alternate Proof of Lemma~\ref{lm2}] Let $\gamma_\star:=(\sigma_2\sigma_1^{-1})^2(\gamma_{24})$, and let $\rho_\star$ denote the right handed half twist about $\gamma_\star$. We note that $f:=\rho_\star\rho_{24}^{-1}=(\sigma_2\sigma_1^{-1})^2 \rho_{24} (\sigma_2\sigma_1^{-1})^{-2} \rho_{24}^{-1} $ is in the normal closure\footnote{Since $(\sigma_2\sigma_1^{-1})^2$ commutes with $\sigma_4$, we see that normal closures in $B_n$ and $B'_n$ coincide.} of $(\sigma_2\sigma_1^{-1})^2$. The result now follows by setting $c:=\delta_{3,5}$ and applying the well suited arc criterion~\cite[Lemma 4.2]{chen2019homomorphisms} since $c$ and $f(c)$ share exactly one  endpoint and have disjoint interiors.\end{proof}

\section{Lower bound on orbit size}\label{lb}
We begin by observing that the conjugacy class of all three cycles in $A_n$ consists of $2{ n\choose 3}$ elements, and we now find the same number of conjugate elements in $B'_n$, and show that they all must map to distinct elements under any nontrivial quotient.\\

\emph{The orbit}: Consider for any triple $i,j,k$ (with $1\leq i<j<k\leq n$) the two elements $\alpha_{ijk}=\rho_{jk}\rho_{ij}^{-1}$ and $\beta_{ijk}=\rho_{ij}^{-1}\rho_{jk}$. We begin by observing that all these $2{ n\choose 3}$ elements are in the same orbit under the conjugation action in $B'_n$ by a change of coordinates. Furthermore, let us note that Lemma~\ref{lemA} implies that for $n\geq 5$, each $\alpha_{ijk}$ and $\beta_{ijk}$ is a normal generator of $B'_n$, and hence must map to nontrivial elements under any nontrivial quotient. The following lemma shows that moreover, the images of these elements must be pairwise distinct.

\begin{lemma}\label{lemB}
For $n\geq 8$, the $2{ n\choose 3}$ elements $\{\alpha_{ijk}, \beta_{ijk}:1\leq i<j<k\leq n\}$ must map to distinct elements under any non-trivial quotient $q:B'_n\rightarrow G$.
\end{lemma}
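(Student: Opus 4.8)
The plan is to show that distinct triples (or the same triple with $\alpha$ versus $\beta$) give elements whose images in $G$ are distinct, by finding, for any two such elements $x$ and $y$, a third normal generator $z$ of $B'_n$ built out of the ``geometry'' of the braids involved, such that $x = y$ in $G$ would force $z$ to map trivially — contradicting Lemma~\ref{lemA}. Concretely, suppose $q(\alpha_{ijk}) = q(\alpha_{i'j'k'})$ with $\{i,j,k\}\neq\{i',j',k'\}$. Since $n\geq 8$ there are at least $8$ strands, so one can choose a disjoint configuration: pick a braid $h\in B_n$ (a change of coordinates, realizable in $B'_n$ using a commuting half-twist as in the proof of Lemma~\ref{lemA}) with $h\alpha_{ijk}h^{-1}$ supported on strands disjoint from those of $\alpha_{i'j'k'}$, and arrange moreover that $h\alpha_{ijk}h^{-1}$ does \emph{not} commute with $\alpha_{i'j'k'}$ after a further small perturbation — that is where the strand count $n\geq 8$ is used, to simultaneously separate supports and keep a single shared puncture so the two half-twist-type elements interact. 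Then $q(\alpha_{ijk}) = q(\alpha_{i'j'k'})$ implies $q$ kills the commutator $[h\alpha_{ijk}h^{-1},\ \alpha_{i'j'k'}]$ (or a similar word), and I would identify this commutator, via the Birman–Ko–Lee relations among the $\rho_{ij}$'s, with a conjugate of $\sigma_2\sigma_1^{-1}$ or $(\sigma_2\sigma_1^{-1})^2$ or $\sigma_3\sigma_1^{-1}$ — each of which normally generates $B'_n$ by Lemmas~\ref{lemA} and~\ref{lm2}. That forces $q$ trivial, a contradiction.

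The key steps, in order: (1) Reduce to a small number of ``model'' cases by the transitivity already noted (the $2\binom n3$ elements form one conjugacy class in $B'_n$), so that comparing $q(x)$ and $q(y)$ for general $x,y$ reduces to a handful of configurations distinguished by the size of the overlap $|\{i,j,k\}\cap\{i',j',k'\}| \in \{0,1,2\}$, together with the $\alpha$-vs-$\beta$ comparison on the same triple. (2) In each model case, exhibit an explicit auxiliary element — a commutator or short product of conjugates of the $\alpha$'s and $\beta$'s — that $q$ must annihilate if $q(x)=q(y)$. (3) Recognize that auxiliary element, using the half-twist/Birman–Ko–Lee calculus, as a conjugate of one of $\sigma_2\sigma_1^{-1}$, $(\sigma_2\sigma_1^{-1})^2$, $\sigma_3\sigma_1^{-1}$; invoke Lemma~\ref{lemA} or Lemma~\ref{lm2}. (4) Conclude $q$ is trivial, contradicting the hypothesis.

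The main obstacle I expect is step (2)–(3) in the \emph{overlap-two} case and the $\alpha_{ijk}$-versus-$\beta_{ijk}$ case, where the two braids being compared cannot be made to have disjoint support and the naive commutator trick degenerates. There one needs a genuinely different witness: for $\alpha_{ijk}$ versus $\beta_{ijk}$, note $\alpha_{ijk}\beta_{ijk}^{-1} = \rho_{jk}\rho_{ij}^{-1}\rho_{jk}^{-1}\rho_{ij}$ is itself a commutator of two half-twists sharing the puncture $j$, and I would show directly that this equals a conjugate of $(\sigma_2\sigma_1^{-1})^2$ (consistent with the squared element appearing in Lemma~\ref{lm2}, which is presumably why that strengthening was needed) — so $q(\alpha_{ijk})=q(\beta_{ijk})$ kills a normal generator. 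Handling the overlap-two case for two distinct triples is similar but requires choosing the comparison word so that the shared pair of strands produces a $\sigma_3\sigma_1^{-1}$-type element on the four strands involved; the bookkeeping of which half-twists survive in the reduced word is the delicate part, and the $n\geq 8$ hypothesis is what lets me always route the unused strands out of the way. I would organize the write-up as a short case analysis with one displayed word identity per case, each followed by the citation to the appropriate normal-generation lemma.
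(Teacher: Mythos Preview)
Your overall strategy---assume two of the listed elements have equal image, then exhibit a normal generator of $B'_n$ that must die---is exactly the paper's. But the execution has two problems, one minor and one substantive.

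\emph{Same-triple case.} Your witness $\alpha_{ijk}\beta_{ijk}^{-1}=\rho_{jk}\rho_{ij}^{-1}\rho_{jk}^{-1}\rho_{ij}$ is correct, but your identification is off: using the Birman--Ko--Lee relation $\rho_{jk}\rho_{ij}^{-1}\rho_{jk}^{-1}=\rho_{ik}^{-1}$, this element is (a conjugate of the inverse of) $\rho_{ij}\rho_{ik}^{-1}$, which is conjugate to $\sigma_2\sigma_1^{-1}$, \emph{not} to $(\sigma_2\sigma_1^{-1})^2$. So only Lemma~\ref{lemA} is needed here, and your speculation that Lemma~\ref{lm2} was introduced for this step is mistaken (it is used later, in the inductive step, to rule out order~$2$ for $\overline{\sigma_2\sigma_1^{-1}}$).

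\emph{Distinct-triples case.} Your proposed mechanism is self-contradictory as written: you ask for $h\alpha_{ijk}h^{-1}$ to have support disjoint from $\alpha_{i'j'k'}$ and simultaneously to fail to commute with $\alpha_{i'j'k'}$. Disjoint support forces commutation, so no such $h$ exists, and the commutator you propose to kill is already trivial in $B'_n$. The paper's fix is not to conjugate one of the two given elements, but to introduce a \emph{third} element $z$ of the same type: since $n\ge 8$ and $|\{i,j,k\}\cup\{l,m,p\}|\le 6$, there are two fresh indices $r,s$; taking $z=\varrho_{k,r}\varrho_{r,s}^{-1}$ (where $k$ is an outer puncture of $\theta_{ijk}$) gives an element disjoint from $\psi_{lmp}$ (hence commuting with it in $B'_n$, so $\overline z$ commutes with $\overline{\psi_{lmp}}=\overline{\theta_{ijk}}$) but sharing the puncture $k$ with $\theta_{ijk}$. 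A change of coordinates reduces this to $\overline{\sigma_2\sigma_1^{-1}}$ commuting with $\overline{\sigma_4\sigma_3^{-1}}$, from which a short manipulation using far commutation yields $\overline{\rho_{2,4}\sigma_2^{-1}}=1$, and Lemma~\ref{lemA} finishes. Note this single argument handles all overlap sizes $0,1,2$ uniformly; your proposed case split by $|\{i,j,k\}\cap\{i',j',k'\}|$ is unnecessary.
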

\noindent\emph{Notation}: For any $x\in B'_n$, let us denote the quotient class $q(x)$ in $G$ by $\overline{x}$. 
\begin{proof}
We will prove the lemma by the method of  contradiction. So let us assume that $\overline{\theta_{ijk}}=\overline{\psi_{lmp}}$ (where $\theta$ or $\psi$ can be either the symbol $\alpha$ or $\beta$).
If $\{i,j,k\}=\{l,m,p\}$, then we are in the case $\overline{\alpha_{ijk}}=\overline{\beta_{ijk}}$, then we get $\overline{\rho_{jk}\rho_{ij}^{-1}}=\overline{\rho_{ij}^{-1}\rho_{jk}}$, and hence $\overline{\rho_{ij}\rho_{jk}\rho_{ij}^{-1}\rho_{jk}^{-1}}=1$. We see that $\rho_{jk}\rho_{ij}^{-1}\rho_{jk}^{-1}=\rho_{ik}^{-1}$, and thus $\rho_{ij}(\rho_{jk}\rho_{ij}^{-1}\rho_{jk}^{-1})$ is conjugate to $\sigma_2\sigma_1^{-1}$, it follows from Lemma~\ref{lemA} that $G$ must be trivial, a contradiction.\\
   In case $\{i,j,k\}\neq\{l,m,p\}$, let us assume that $k$ is an end vertex for exactly one supporting arc of $\theta_{ijk}$ (a symmetric argument works in other cases). Since  $n\geq 8$, we see that we can find indices $r,s$ not contained in $\{i,j,k,l,m,p\}$.
Now we see that $\overline{\varrho_{k,r}\varrho_{r,s}^{-1}}$ commutes with $\overline{\psi_{lmp}}=\overline{\theta_{ijk}}$. By a change of coordinates (conjugation in $B'_n$) we may assume without loss of generality that $\overline{\sigma_2\sigma_1^{-1}}$ commutes with $\overline{\sigma_4\sigma_3^{-1}}$. 
In other words, we have $\overline{\sigma_2\sigma_1^{-1}\sigma_4\sigma_3^{-1}}=\overline{\sigma_4\sigma_3^{-1}\sigma_2\sigma_1^{-1}}$, and consequently we obtain by the far commutation relation: $\overline{\sigma_2\sigma_3^{-1}}=\overline{\sigma_3^{-1}\sigma_2}$, or equivalently $1=\overline{(\sigma_3\sigma_2\sigma_3^{-1})\sigma_2^{-1}}=\overline{\rho_{2,4}\sigma_2^{-1}}$.
The result now follows by a change of coordinates   and Lemma~\ref{lemA}.\end{proof}

We can say a bit more in case there are exactly $2{ n\choose 3}$ elements in the orbit of $\overline{\sigma_2\sigma_1^{-1}}$ in $G$.

\begin{lemma}\label{lemC}
For $n\geq 8$, if the conjugacy class of $\overline{\sigma_2\sigma_1^{-1}}$ under a nontrivial quotient of $B'_n$ consists of exactly $2{ n\choose 3}$ elements, then we must have $(\overline{\sigma_i\sigma_1^{-1}})^2=1$ for any $3\leq i\leq n-1$, and $\overline{\sigma_2\sigma_1^{-1}}=\overline{\sigma_2^{-1}\sigma_1}$.
\end{lemma}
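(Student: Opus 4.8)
The plan is to leverage the extremal hypothesis to identify the \emph{entire} conjugacy class, and then exhibit enough braids whose images are forced into it. Write $q\colon B'_n\to G$ for the quotient and $S=\{\alpha_{ijk},\beta_{ijk}:1\le i<j<k\le n\}$ for the $2\binom n3$ braids of the orbit. Since the elements of $S$ are all $B'_n$-conjugate and, by Lemma~\ref{lemB}, map to pairwise distinct elements of $G$, their images already exhaust a $2\binom n3$-element subset of the conjugacy class $C$ of $\overline{\sigma_2\sigma_1^{-1}}=\overline{\alpha_{123}}$; so the hypothesis $|C|=2\binom n3$ forces $C=q(S)$, with $q|_S$ a bijection onto $C$.

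The heart of the argument would be a single ``master relation'': $\overline{\alpha_{ijk}}=\overline{\beta_{ijk}}^{-1}$ for every triple. By a change of coordinates inside $B'_n$ (take a mapping class carrying the arc pair $(\gamma_{12},\gamma_{23})$ to $(\gamma_{ij},\gamma_{jk})$, hence conjugating $\alpha_{123}\mapsto\alpha_{ijk}$ and $\beta_{123}\mapsto\beta_{ijk}$, and correct its exponent sum to $0$ using a far-away half twist) it is enough to prove this for the triple $\{1,2,3\}$. First I would observe that $\sigma_1\sigma_2^{-1}=(\sigma_2\sigma_1^{-1})^{-1}$ is $B'_n$-conjugate to $\sigma_2\sigma_1^{-1}$, via $(\sigma_1\sigma_2\sigma_1)\sigma_4^{-3}$: conjugation by the half twist $\sigma_1\sigma_2\sigma_1$ interchanges $\sigma_1$ and $\sigma_2$, while $\sigma_4^{-3}$ both commutes with $\sigma_2\sigma_1^{-1}$ and corrects the exponent sum to $0$. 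Hence $\overline{\sigma_1\sigma_2^{-1}}=\overline{\alpha_{123}^{-1}}$ lies in $C=q(S)$, so it equals $\overline{\alpha_{T'}}$ or $\overline{\beta_{T'}}$ for some triple $T'$. The supporting arcs of $\alpha_{123}^{-1}$ are the very same $\gamma_{12},\gamma_{23}$ as those of $\alpha_{123}$, so the ``lone endpoint'' bookkeeping in the proof of Lemma~\ref{lemB} applies word for word and rules out $T'\ne\{1,2,3\}$; and $\overline{\sigma_1\sigma_2^{-1}}=\overline{\alpha_{123}}$ is impossible, since it gives $(\overline{\sigma_2\sigma_1^{-1}})^2=1$ and hence $G=1$ by Lemma~\ref{lm2}. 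So $\overline{\sigma_1\sigma_2^{-1}}=\overline{\beta_{123}}=\overline{\sigma_1^{-1}\sigma_2}$, i.e.\ $\overline{\alpha_{123}}=\overline{\beta_{123}}^{-1}$, and inverting both sides yields $\overline{\sigma_2\sigma_1^{-1}}=\overline{\sigma_2^{-1}\sigma_1}$ --- the second assertion of the lemma.

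For the first assertion, fix $3\le i\le n-1$; then $\sigma_i$ commutes with $\sigma_1$, so $\sigma_i\sigma_1^{-1}=\sigma_1^{-1}\sigma_i$, and telescoping gives two factorizations inside $B'_n$:
\[
\sigma_i\sigma_1^{-1}=\alpha_{i-1,i,i+1}\,\alpha_{i-2,i-1,i}\cdots\alpha_{1,2,3}=\sigma_1^{-1}\sigma_i=\beta_{1,2,3}\,\beta_{2,3,4}\cdots\beta_{i-1,i,i+1}.
\]
Applying $q$ and replacing each $\overline{\beta_{j,j+1,j+2}}$ by $\overline{\alpha_{j,j+1,j+2}}^{-1}$ (the master relation) turns the image of the second product into the inverse of the image of the first, so $\overline{\sigma_i\sigma_1^{-1}}=\overline{\sigma_i\sigma_1^{-1}}^{\,-1}$, i.e.\ $(\overline{\sigma_i\sigma_1^{-1}})^2=1$.

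I expect the only genuine obstacle to be the identification step in the second paragraph: checking that $\overline{\sigma_1\sigma_2^{-1}}$ must equal $\overline{\alpha_{123}}$ or $\overline{\beta_{123}}$ requires re-running the ``commutation with a disjoint triple'' argument of Lemma~\ref{lemB} (which is also where the hypothesis $n\ge 8$ gets used, to place the auxiliary punctures), whereas every other step is a short formal manipulation. The only other points needing a line of care are the routine exponent-sum corrections that put the change-of-coordinates braids into $B'_n$, which work for $n\ge 5$.
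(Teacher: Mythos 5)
Your proof is correct and follows essentially the same route as the paper's: both arguments observe that $\overline{\sigma_1\sigma_2^{-1}}$ lies in the conjugacy class, invoke the extremal hypothesis together with the ``commutation with a disjoint triple'' mechanism of Lemma~\ref{lemB} (and Lemma~\ref{lm2}) to force $\overline{\sigma_2\sigma_1^{-1}}=\overline{\sigma_2^{-1}\sigma_1}$, and then telescope to get the involution statement. The only cosmetic difference is that the paper establishes $(\overline{\sigma_3\sigma_1^{-1}})^2=1$ directly by a two-term telescope and handles general $i$ by conjugation, whereas you run an $(i-1)$-term telescope; both are instances of the same computation.
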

\begin{proof}
Let us consider the four elements $\overline{\sigma_1\sigma_2^{-1}},\overline{\sigma_1^{-1}\sigma_2},\overline{\sigma_2\sigma_1^{-1}},\overline{\sigma_2^{-1}\sigma_1}$. If these four quotient classes are pairwise distinct, the second half of the proof of Lemma~\ref{lemB} in fact shows that the orbit of $\overline{\sigma_2\sigma_1^{-1}}$ must consist of at least $4{ n\choose 3}$ elements. Also, we note the only way these do not give four distinct quotient classes under a nontrivial quotient is if $\overline{\sigma_2\sigma_1^{-1}}=\overline{\sigma_2^{-1}\sigma_1}$ and (by symmetry) $\overline{\sigma_1\sigma_2^{-1}}=\overline{\sigma_1^{-1}\sigma_2}$  (the other possibilities are ruled out by Lemmas~\ref{lemA} and \ref{lm2}). If this is the case, then by conjugation we also get $\overline{\sigma_3\sigma_2^{-1}}=\overline{\sigma_3^{-1}\sigma_2}$  and $\overline{\sigma_2\sigma_3^{-1}}=\overline{\sigma_2^{-1}\sigma_3}$. Consequently, we have:
\begin{equation}
    \overline{\sigma_3\sigma_1^{-1}}=\overline{\sigma_3\sigma_2^{-1}}\overline{\sigma_2\sigma_1^{-1}}
    =\overline{\sigma_3^{-1}\sigma_2}\overline{\sigma_2^{-1}\sigma_1}=\overline{\sigma_3^{-1}\sigma_1}
\end{equation}
As $\sigma_3\sigma_1^{-1}=\sigma_1^{-1}\sigma_3$, we see that the quotient class of $\sigma_3\sigma_1^{-1}$ must be an involution, and the rest follows since $\sigma_i\sigma_1^{-1}$ is conjugate to $\sigma_3\sigma_1^{-1}$ for any $3\leq i\leq n-1$.
\end{proof}

\section{Inductive step}\label{ind}
In this final section we will complete the proof of Theorem~\ref{A} for $n\geq 8$, by using a version of induction, in steps of three, with two families of hypothesis.\\
\emph{Two families of hypotheses}: For any natural number $n$, let $P(n)$ and $Q(n)$ be the statements:\\
\noindent $P(n)$: If $G$ is a nontrivial quotient of $B'_n$, then $|G|\geq \frac{n!}{2}$.\\
\noindent $Q(n)$: If $G$ is a nontrivial quotient of $B'_n$, then either $|G|> |A_n|=\frac{n!}{2}$, or $G$ is isomorphic to $A_n$. Moreover, in the latter case the quotient map $B'_n\rightarrow G$ is obtained by postcomposing the natural map $\pi$ with an automorphism of $A_n$.\\

\begin{proof}[Proof of Theorem~\ref{A}]
\noindent If $k\geq 5$, and the weaker hypothesis $P(k)$ is true, then we will show that so is the stronger hypothesis $Q(k+3)$, thereby proving Theorem~\ref{A} for $n\geq 8$.

\noindent\emph{Base Cases}: $P(5),P(6)$ and $P(7)$ are true by the work of Caplinger--Kordek~\cite[Theorem 1]{caplinger2020small}.
As mentioned earlier, $Q(5),Q(6)$ and $Q(7)$ are verified by Caplinger~\cite{code} with computer assistance.\\

\noindent\emph{Inductive step}: Suppose $n\geq 8$, and $P(n-3)$ is true. We would like to show that $Q(n)$ holds as well. We use the same notation as above to let $\overline{x}$ denote the quotient class under the map $q:B'_n\rightarrow G$.
As $n\geq 8$, we see by Lemma~\ref{lemB} that the conjugacy class of $\overline{\sigma_2\sigma_1^{-1}}$ in $G$ has at least $2{ n\choose 3}$ elements. By disjointedness, we that $\sigma_2\sigma_1^{-1}$  commutes with the subgroup $R\cong B'_{n-3}$ consisting of the commutator subgroup of the braid group coming from the rightmost $n-3$ punctures. Thus the centralizer of $\overline{\sigma_2\sigma_1^{-1}}$ contains both the cyclic subgroup $M$ of $G$ generated by $\overline{\sigma_2\sigma_1^{-1}}$, and $\overline{R}$, the image of $R$ under the quotient map. Let $Z=M\cap\overline{R}$. We note that $Z$ is central in $\overline{R}$, and therefore $\overline{R}/Z$ must be a nontrivial quotient of $ B'_{n-3}\cong R$ (in case it was trivial, then $\overline{R}=Z$ is a
abelian, whence we get a nontrivial abelian quotient of $ B'_{n-3}\cong R$, contradicting that $B'_{n-3}$ is perfect). By the induction hypothesis $P(n-3)$, it follows that $|\overline{R}/Z|\geq \frac{(n-3)!}{2}$.
As the centralizer $C$ of $\overline{\sigma_2\sigma_1^{-1}}$ contains the subgroup generated by $M$ and $\overline{R}$, it follows that $|C|\geq |M||\overline{R}/Z|\geq |M|\frac{(n-3)!}{2}$. Combining with the lower bound of the orbit size from Lemma~\ref{lemB}, we see by the orbit stabilizer theorem that:
\begin{equation}
    |G|\geq 2{ n\choose 3}|M|\frac{(n-3)!}{2}=\frac{(n!)|M|}{6}
\end{equation}
From Lemma~\ref{lm2} we see that $|M|\geq 3$ (as $\overline{\sigma_2\sigma_1^{-1}}$ cannot have order 1 or 2 in a nontrivial quotient), and thus $|G|\geq \frac{n!}{2} $, i.e. $P(n)$ is true. Moreover in case of equality above, we see that $|M|=3$ and the conjugacy class of $\overline{\sigma_2\sigma_1^{-1}}$ has exactly $2{ n\choose 3}$ elements. Therefore it follows that $\overline{\sigma_2\sigma_1^{-1}}$ has order 3, and by Lemma~\ref{lemC} we also see that $\overline{\sigma_i\sigma_1^{-1}}$ has order 2 for all $3\leq i\leq n-1$.
Now it follows from the Carmichael presentation \cite[page 172]{CarmichaelIntroductionTT} of the alternating group that $q:B'_n\rightarrow G$ factors through $A_n$. Since the alternating groups $A_n$ are simple for $n\geq 5$, we see that the map from $A_n$ to $G$ must be an isomorphism, and so $Q(n)$ is true and the proof is complete.\end{proof}

\bibliographystyle{plain}
\bibliography{references}

\begin{thebibliography}{10}

\bibitem{Baumeister2019OnTS}
B.~Baumeister, Dawid Kielak, and E.~Pierro.
\newblock On the smallest non‐abelian quotient of {Aut($F_n$)}.
\newblock {\em Proceedings of The London Mathematical Society}, 118:1547--1591,
  2019.

\bibitem{BIRMAN1998322}
Joan Birman, Ki~Hyoung Ko, and Sang~Jin Lee.
\newblock A new approach to the word and conjugacy problems in the braid
  groups.
\newblock {\em Advances in Mathematics}, 139(2):322--353, 1998.

\bibitem{code}
Noah Caplinger.
\newblock Code for small quotients of braid groups,
  https://github.com/noah-caplinger/small-quotients-of-braids, 2020.

\bibitem{caplinger2020small}
Noah Caplinger and Kevin Kordek.
\newblock Small quotients of braid groups.
\newblock {\em arXiv math.GT 2009.10139}, 2020.

\bibitem{CarmichaelIntroductionTT}
R.~Carmichael.
\newblock Introduction to the theory of groups of finite order.
\newblock Dover, 1956.

\bibitem{chen2019homomorphisms}
Lei Chen, Kevin Kordek, and Dan Margalit.
\newblock Homomorphisms between braid groups. arxiv e-prints, page.
\newblock {\em arXiv math.GT:1910.00712}, 2019.

\bibitem{chudnovsky2020finite}
Alice Chudnovsky, Kevin Kordek, Qiao Li, and Caleb Partin.
\newblock Finite quotients of braid groups.
\newblock {\em Geometriae Dedicata}, pages 1--8, 2020.

\bibitem{Gorin_1969}
E~A Gorin and V~J Lin.
\newblock Algebraic equations with continuous coefficients and some problems of
  the algebraic theory of braids.
\newblock {\em Mathematics of the {USSR}-Sbornik}, 7(4):569--596, apr 1969.

\bibitem{Kielak2017OnTS}
Dawid Kielak and E.~Pierro.
\newblock On the smallest non-trivial quotients of mapping class groups.
\newblock {\em Groups, Geometry, and Dynamics}, 14(2):489–512, 2020.

\bibitem{Kolay2021BMCG}
Sudipta Kolay.
\newblock Smallest non-cyclic quotients of braid and mapping class groups.
\newblock {\em Preprint}, 2021.

\bibitem{Kolay2021AFn}
Sudipta Kolay.
\newblock Smallest non-cyclic quotients of the automorphism of free groups.
\newblock {\em Preprint}, 2021.

\bibitem{lin2004braids}
Vladimir Lin.
\newblock Braids and permutations.
\newblock {\em arXiv math.GT 0404528}, 2004.

\bibitem{Mecchia2009OnMF}
M.~Mecchia and B.~Zimmermann.
\newblock On minimal finite quotients of outer automorphism groups of free
  groups.
\newblock {\em Atti Semin. Mat. Fis. Univ. Modena Reggio Emilia}, page
  115–120, 2011.

\bibitem{scherich2020finite}
Nancy Scherich and Yvon Verberne.
\newblock Finite image homomorphisms of the braid group and its
  generalizations.
\newblock {\em arXiv math.GR 2012.01378}, 2020.

\bibitem{Zimmermann2008ANO}
Bruno~P. Zimmermann.
\newblock On minimal finite quotients of mapping class groups.
\newblock {\em Rocky Mountain J. Math.}, 42(4):1411--1420, 08 2012.

\end{thebibliography}

\end{document}